
\documentclass[preprint,12pt,3p]{elsarticle}




\usepackage{amssymb}
\usepackage{amsthm}
\usepackage{amsmath}
\usepackage{float}
\usepackage[hidelinks,colorlinks=false]{hyperref}
\newtheorem{theorem}{Theorem}

\newtheorem{corollary}{Corollary}

\newtheorem{definition}{Definition}






\begin{document}

\begin{frontmatter}

\title{Chromatic number, Clique number, and Lov\'{a}sz's bound:\\
In a comparison}

\author[label1, label2]{Hamid Reza Daneshpajouh}
\address[label1]{School of Mathematics, Institute for Research in Fundamental Sciences (IPM),
Tehran, Iran, P.O. Box 19395-5746}
\address[label2]{Moscow Institute of Physics and Technology, Institutsky lane 9, Dolgoprudny, Moscow region, 141700}


\ead{hr.daneshpajouh@ipm.ir, hr.daneshpajouh@phystech.edu}



\begin{abstract}
In the way of proving Kneser's conjecture, L\'{a}szl\'{o} Lov\'{a}sz settled out a new lower bound for the chromatic number. He showed that if neighborhood complex $\mathcal{N}(G)$ of a graph $G$ is topologically $k$-connected, then its chromatic number is at least $k+3$. Then he completed his proof by showing that this bound is tight for the Kneser graph. However, there are some graphs where this bound is not useful at all, even comparing by the obvious bound; the clique number. For instance, if a graph contains no complete bipartite graph $\mathcal{K}_{l, m}$, then its Lov\'{a}sz's bound is at most $l+m-1$. But, it can have an arbitrarily large chromatic number.

In this note, we present new graphs showing that the
gaps between the chromatic number, the clique number, and the Lov\'{a}sz bound can be arbitrarily large. More precisely, for given positive integers $l, m$ and $2\leq p\leq q$, we construct a connected graph which contains a copy of $\mathcal{K}_{l,m}$, and its chromatic number, clique number, and Lov\'{a}sz's bound are $q$, $p$, and $3$, respectively.

\end{abstract}

\begin{keyword}
Chromatic number\sep  topologically $k$-connected \sep neighborhood complex  
\end{keyword}

\end{frontmatter}

\section{Introduction}
Ingenious proof of the well-known Kneser conjecture by Lov\'{a}sz~\cite{Lovasz}, is based on introducing a topological lower bound for the chromatic number of a graph.
To state his bound, let us first recall some definitions and set up our notation. The neighborhood complex $\mathcal{N}(G)$ of a given graph $G$ is the simplicial complex whose simplicies are all subsets of vertices which have a common neighbor. Here and subsequently, the geometric realization of a simplicial complex $\mathcal{K}$ will be denoted by $||\mathcal{K}||$. A topological space $X$ is called $k$-connected if for every $-1\leq m\leq k$, each continuous mapping of the $m$-dimensional sphere $\mathbb{S}^m$ into $X$ can be extended to a continuous mapping of the $(m + 1)$-dimensional ball $\mathbb{B}^{m+1}$. Here $\mathbb{S}^{-1}$ is interpreted as $\emptyset$ and $\mathbb{B}^{-1}$ a single point, and so $(-1)$-connected means nonempty. In another words, a non-empty topological space $X$ is $k$-connected if all the homotopy group $\pi_i(X)$ are trivial, for all $i= 0, \cdots, k$. The largest $k$ such that $X$ is $k$-connected is called the connectivity of $X$, denoted by $conn(X)$. A simplicial complex is called $k$-connected if
its geometric realization is $k$-connected. Finally, the chromatic number $\chi (G)$ of a graph $G$ is the smallest number of colors needed to color the vertices of $G$ so that no two adjacent vertices share the same color. Now, we are in a position to recall the Lov\'{a}sz bound.


\begin{theorem}[ Lov\'{a}sz's bound~\cite{Lovasz}]
If $\mathcal{N}(G)$ is $k$-connected, then $\chi(G)\geq k+3$.
\end{theorem}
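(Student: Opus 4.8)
\noindent The plan is to argue by contradiction against a fixed optimal coloring, converting the combinatorial data into a topological obstruction governed by a Borsuk--Ulam-type inequality. Set $n=\chi(G)$ and recall that a proper $n$-coloring is precisely a graph homomorphism $c\colon G\to K_n$. My aim is to show that such a homomorphism forces $\mathcal{N}(G)$ to admit an equivariant map into a sphere of too small a dimension, contradicting the assumed $k$-connectivity; concretely I expect to reach $n-2\ge k+1$, i.e. $\chi(G)\ge k+3$.

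First I would compute the target. In $K_n$ a vertex set $S$ has a common neighbour exactly when $S\neq V(K_n)$, so the simplices of $\mathcal{N}(K_n)$ are all proper subsets of $[n]$; hence $\mathcal{N}(K_n)$ is the boundary complex $\partial\Delta^{n-1}$ and $\|\mathcal{N}(K_n)\|\cong\mathbb{S}^{n-2}$, a space whose connectivity is exactly $n-3$. This already gives the sanity check that the bound is sharp on complete graphs.

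The central difficulty is functoriality together with the free $\mathbb{Z}_2$-symmetry needed to apply Borsuk--Ulam. The neighborhood complex itself carries no obvious free involution, so I would replace $\mathcal{N}(G)$ by the box complex $B(G)$ (equivalently $\mathrm{Hom}(K_2,G)$), which is a free $\mathbb{Z}_2$-space and satisfies $\|B(G)\|\simeq\|\mathcal{N}(G)\|$ as ordinary spaces, so $B(G)$ inherits the $k$-connectivity hypothesis. The homomorphism $c$ is functorial on box complexes and yields a $\mathbb{Z}_2$-map $B(G)\to B(K_n)$, and one checks $B(K_n)\simeq_{\mathbb{Z}_2}\mathbb{S}^{n-2}$ with the antipodal action; composing produces a $\mathbb{Z}_2$-map $B(G)\to\mathbb{S}^{n-2}$. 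Verifying the equivalence $\|B(G)\|\simeq\|\mathcal{N}(G)\|$ and the equivariant identification of $B(K_n)$ is the technical heart of the argument.

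Finally I would invoke the topological input. For a free $\mathbb{Z}_2$-space $X$ the $\mathbb{Z}_2$-index satisfies $\mathrm{ind}(X)\ge\mathrm{conn}(X)+1$ (a $k$-connected free $\mathbb{Z}_2$-complex admits an equivariant map from $\mathbb{S}^{k+1}$, so by Borsuk--Ulam its index is at least $k+1$), while the index is monotone under $\mathbb{Z}_2$-maps. Applying this to $X=B(G)$ gives $\mathrm{ind}(B(G))\ge k+1$, and the map $B(G)\to\mathbb{S}^{n-2}$ gives $\mathrm{ind}(B(G))\le n-2$; hence $k+1\le n-2$, i.e. $\chi(G)\ge k+3$. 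The main obstacle is not this final inequality but the equivariant bookkeeping of the previous step---ensuring the coloring really induces a free $\mathbb{Z}_2$-map out of a space homotopy equivalent to $\mathcal{N}(G)$.
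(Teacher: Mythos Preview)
The paper does not give its own proof of this theorem at all: Theorem~1 is stated as background and attributed to Lov\'asz~\cite{Lovasz}, with no argument supplied in the present note. So there is no in-paper proof to compare against.

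That said, your outline is a correct proof, and it follows what is by now the standard route. A few remarks on how it relates to the original. Lov\'asz's 1978 argument did not use the box complex; he passed from $\mathcal{N}(G)$ to an auxiliary complex (often called the Lov\'asz complex) carrying a free $\mathbb{Z}_2$-action and homotopy equivalent to $\mathcal{N}(G)$, and then applied a Borsuk--Ulam-type statement directly. Your replacement of $\mathcal{N}(G)$ by $B(G)\simeq\mathrm{Hom}(K_2,G)$ is the modern repackaging (due to Csorba, Matou\v{s}ek--Ziegler, and others) of exactly the same idea: one trades $\mathcal{N}(G)$, which is functorial but has no free involution, for a homotopy-equivalent free $\mathbb{Z}_2$-space on which graph homomorphisms act equivariantly. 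The two approaches are equivalent in content; the box-complex language buys cleaner functoriality and a direct identification $B(K_n)\simeq_{\mathbb{Z}_2}\mathbb{S}^{n-2}$, which is exactly what you exploit.

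Your use of the inequality $\mathrm{ind}(X)\ge\mathrm{conn}(X)+1$ is the right topological input, and the chain $k+1\le\mathrm{ind}(B(G))\le\mathrm{ind}(\mathbb{S}^{n-2})=n-2$ is correct. The only place to be careful, as you yourself flag, is the homotopy equivalence $\|B(G)\|\simeq\|\mathcal{N}(G)\|$; this is standard but not a triviality (it goes through a closure/fold argument or through the Lov\'asz complex), so in a written proof you should cite it rather than leave it implicit.
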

On the other hand, recall that a known and obvious lower bound of any graph $G$ is the clique number $\omega (G)$ of $G$, i.e. $\chi(G)\geq\omega(G)$. Now the natural questions arise: In general, how much the Lov\'{a}sz bound can be close to the chromatic number? Or in compassion by the clique number, can the Lov\'{a}sz  bound give a better approximation than the clique number, in general? There are Several examples that shows the Lov\'{a}sz bound is not very useful. For instance, if a graph contains no complete bipartite graph $\mathcal{K}_{l, m}$, then its Lov\'{a}sz's bound is at most $l+m-1$~\cite{Csorba1}. But, it can have an arbitrarily large chromatic number~\cite{Erdos}.

Even, in comparison with the clique number, we cannot hope for a better result.
For example, in~\cite{Kahle} author shows that the connectivity of the neighborhood complex of a random graph is almost always between $\frac{1}{2}$ and $\frac{2}{3}$ of the expected clique number. As another example, in~\cite{Csorba} authors construct a sequence of graphs ${\{G(m)\}}_{m=1}^{\infty}$ such that:
$$\lim_{m\to\infty}conn (||\mathcal{N}(G(m))||) = c\,\, ,\,\text{but}\, \lim_{m\to\infty}\omega (G(m)) = \infty,$$
where $c$ is a constant number.

\section{Main part}
In this section, we state and discuss the main results of this note.
\begin{definition}
Let $H$ and $K$ be connected non-bipartite graphs with disjoint vertex sets $V(H)$ and $V(K)$, respectively. Fix $x\in V(H)$, and $y\in V(K)$. Let $G_{H_x, K_y}$ be obtained by taking $H$, $K$ and attaching a path of length $2$ by one its endpoints to $x$ and the other one to $y$. In other words, $G_{H_x, K_y}$ is a graph with vertex set $V(H)\cup V(K)\cup\{z\}$ and edge set
$E(H)\cup E(K)\cup\{\{x,z\}, \{y, z\}\}$ where $z\notin\left(V(H)\cup V(K)\right)$.
\end{definition}
Before we proceed, we need to recall a definition.
Let ${\{(X_i,x_i)\}}_{i\in I}$ be a family of pointed topological spaces, i.e., topological spaces with distinguished base points $x_i$. The wedge sum $\bigvee _{i}X_{i}$ of the family is the quotient space of the disjoint union of members of the family by the identification all $x_i$, i.e, $\bigvee _{i}X_{i}=\coprod _{i}X_{i}\;/{\{x_i: i\in I\}}$.  In other words, the wedge sum is the joining of several spaces at a single point. Note that, in general, the wedge sum depends on the choice of base points. However, for simplicity, we omit base points from the notation when they are clear from context. Now we are in a position to state and prove our main theorem.
\begin{theorem}
Let $H$ and $K$ be connected non-bipartite graphs with disjoint vertex sets $V(H)$ and $V(K)$, respectively. Fix $x\in V(H)$, and $y\in V(K)$. Then $||\mathcal{N}(G_{H_x,K_y})||$ is homotopy equivalent to $||\mathcal{N}(H)||\vee||\mathcal{N}(K)||\vee\mathbb{S}^1$. In particular, 
$$conn (||\mathcal{N}(G_{H_x,K_y})||)=0.$$
\end{theorem}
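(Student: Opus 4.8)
The plan is to dissect $\mathcal{N}(G)$, where $G:=G_{H_x,K_y}$, around the bridge vertex $z$, whose neighborhood is exactly $N_G(z)=\{x,y\}$. First I would record all vertex neighborhoods: for $w\in V(H)\setminus\{x\}$ and $w\in V(K)\setminus\{y\}$ one has $N_G(w)=N_H(w)$, resp. $N_G(w)=N_K(w)$; at the attaching points $N_G(x)=N_H(x)\cup\{z\}$ and $N_G(y)=N_K(y)\cup\{z\}$; and $N_G(z)=\{x,y\}$. Since $z$ can belong to a simplex only when that simplex has common neighbor $x$ or $y$, the simplices containing $z$ are precisely the faces of $\sigma_x:=N_H(x)\cup\{z\}$ and $\sigma_y:=N_K(y)\cup\{z\}$. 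As $N_H(x)\subseteq V(H)$ and $N_K(y)\subseteq V(K)$ are disjoint, $\sigma_x\cap\sigma_y=\{z\}$, so the closed star $\overline{\mathrm{st}}(z)=\sigma_x\cup\sigma_y$ is a cone with apex $z$ (hence contractible) and the link is $\mathrm{lk}(z)=\Delta(N_H(x))\sqcup\Delta(N_K(y))$, a disjoint union of two nonempty simplices, so $\|\mathrm{lk}(z)\|\simeq\mathbb{S}^0$.

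Next I would describe the subcomplex $L$ of all simplices of $\mathcal{N}(G)$ not containing $z$. Inspecting common neighbors shows that a simplex contained in $V(H)$ (resp. $V(K)$) lies in $\mathcal{N}(H)$ (resp. $\mathcal{N}(K)$), and that the only simplex meeting both $V(H)$ and $V(K)$ is the edge $\{x,y\}$, whose common neighbor is $z$. Hence $L=\mathcal{N}(H)\sqcup\mathcal{N}(K)$ together with the single edge $\{x,y\}$ joining $x\in\mathcal{N}(H)$ to $y\in\mathcal{N}(K)$. Because $H$ and $K$ are connected and non-bipartite, $\|\mathcal{N}(H)\|$ and $\|\mathcal{N}(K)\|$ are path-connected; collapsing the contractible arc $\{x,y\}$ then yields $\|L\|\simeq\|\mathcal{N}(H)\|\vee\|\mathcal{N}(K)\|$, in particular $\|L\|$ is connected.

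Then I would reassemble $\mathcal{N}(G)=L\cup\overline{\mathrm{st}}(z)$, glued along $\mathrm{lk}(z)$. Since $\mathrm{lk}(z)\hookrightarrow\overline{\mathrm{st}}(z)$ is a cofibration of CW pairs and $\overline{\mathrm{st}}(z)$ is contractible, this exhibits $\|\mathcal{N}(G)\|$ as the homotopy cofiber (mapping cone) of the inclusion $\mathrm{lk}(z)\hookrightarrow\|L\|$. Each summand $\Delta(N_H(x))$, $\Delta(N_K(y))$ of $\mathrm{lk}(z)$ is contractible, so replacing $\mathrm{lk}(z)$ by the homotopy equivalent $\mathbb{S}^0$ gives an attaching map $\mathbb{S}^0\to\|L\|$ which, because $\|L\|$ is connected, is homotopic to a constant map. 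The mapping cone of a constant map $\mathbb{S}^0\to Y$ into a connected space $Y$ is $Y\vee\mathbb{S}^1$, so $\|\mathcal{N}(G)\|\simeq\|L\|\vee\mathbb{S}^1\simeq\|\mathcal{N}(H)\|\vee\|\mathcal{N}(K)\|\vee\mathbb{S}^1$. Finally this wedge is connected, while its fundamental group is a free product containing the $\mathbb{Z}$ coming from the $\mathbb{S}^1$ factor, hence nontrivial; thus the space is $0$-connected but not $1$-connected, giving $conn(\|\mathcal{N}(G_{H_x,K_y})\|)=0$.

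The main obstacle is the gluing step: one must argue rigorously that the contractible link pieces $\Delta(N_H(x))$ and $\Delta(N_K(y))$ may be treated as two points, i.e. that the homotopy type of the mapping cone depends only on the homotopy class of the attaching map $\mathrm{lk}(z)\to\|L\|$. This rests on the homotopy invariance of mapping cones (homotopy pushouts) under homotopy equivalences of the input diagram, valid here since all inclusions are cofibrations of CW pairs. The supporting fact that $\|\mathcal{N}(H)\|$ and $\|\mathcal{N}(K)\|$ are connected for connected non-bipartite graphs should be cited or verified separately, since both the splitting of $L$ and the final conclusion $conn=0$ depend on it.
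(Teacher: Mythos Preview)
Your proof is correct and follows essentially the same strategy as the paper: both identify the same decomposition of $\mathcal{N}(G)$ into $\mathcal{N}(H)$, $\mathcal{N}(K)$, the edge $\{x,y\}$ (with common neighbor $z$), and the two cones with apex $z$ over $N_H(x)$ and $N_K(y)$, and both then use contractibility of the cones together with path-connectedness of $\|\mathcal{N}(H)\|$ and $\|\mathcal{N}(K)\|$ to arrive at the wedge with $\mathbb{S}^1$. The only difference is packaging: the paper contracts the cones first and then informally ``slides'' the endpoints of the edge $\{x,y\}$ to $z$, whereas you collapse the edge first and handle the cones via the star/link formalism and homotopy invariance of mapping cones, which makes the same steps more explicit.
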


\begin{proof}
Throughout the proof, for a graph $F$ and a vertex $\nu\in V(F)$, $CN_F(\nu)$ denotes for the set of all neighbors of $\nu$ in $F$.
Let us write $\mathcal{K}= \mathcal{N}(G_{H_x,K_y})$. We can split $\mathcal{K}$ into three simplicial subcomplexes as follows.
\begin{equation*} 
\begin{split}
\mathcal{K} = & \underbrace{\left(\mathcal{N}(H)\cup\{A\cup\{z\} : A\subseteq CN_{H}(x)\}\right)}_{\mathcal{K}_1}\bigcup \\
 & \underbrace{\left(\mathcal{N}(K)\cup\{A\cup\{z\} : A\subseteq CN_{K}(y)\}\right)}_{\mathcal{K}_2}\bigcup\\
 & \underbrace{\{\emptyset, \{x\}, \{y\}, \{x,y\}\}}_{\mathcal{K}_3}.
\end{split}
\end{equation*}
Here is an illustration of the structures of $||\mathcal{K}_1||$, $||\mathcal{K}_2||$, and $||\mathcal{K}_3||$:
\begin{figure}[H]
    \centering
    \includegraphics[scale=0.37]{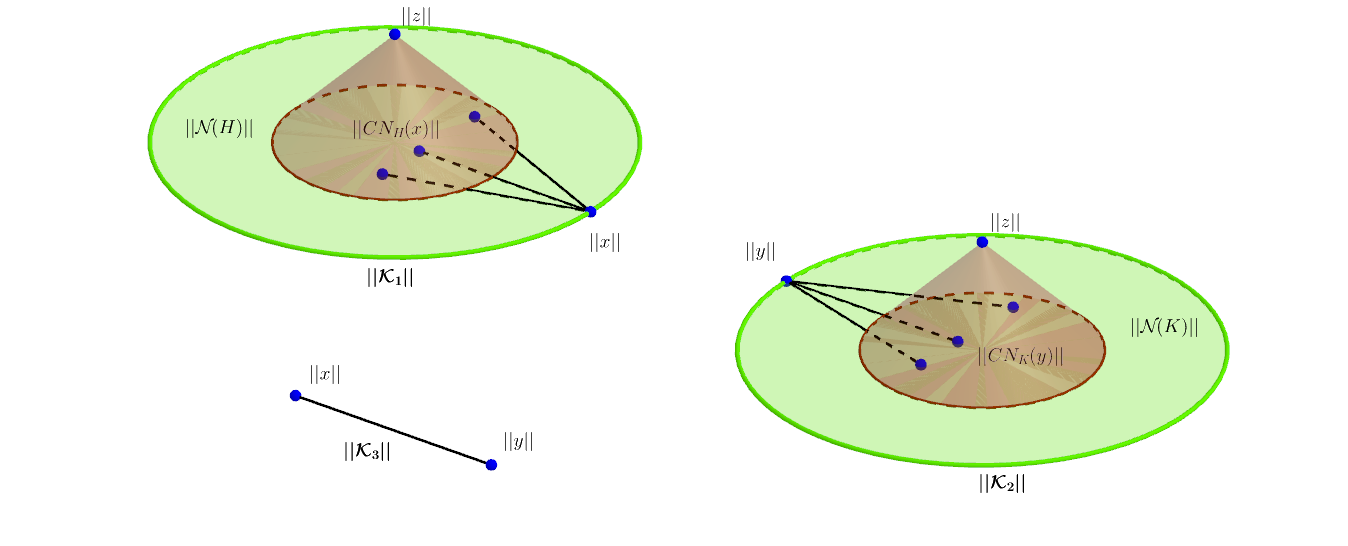}
\end{figure}
Note that $\mathcal{N}(H)\subset\mathcal{K}_1$, and $\mathcal{N}(K)\subset\mathcal{K}_2$ are created by attaching cones over $CN_{H}(x)$ and $CN_{K}(y)$, respectively. Also, $\mathcal{K}_1\cap\mathcal{K}_2 =\{z\}$. So, we can contract $||\mathcal{K}||$ to $X=||\mathcal{N}(H)||\vee||\mathcal{N}(K)||\cup||\mathcal{K}_3||$. On the other hand, it is not hard to see that the neighborhood complex of a connected non-bipartite graph is always path connected. So, $||\mathcal{N}(H)||$ and $||\mathcal{N}(K)||$ are path connected as well. Thus, $X$ is path-connected. Finally, we can move gradually the endpoints of $\mathcal{K}_3$, $||x||$ and $||y||$, inside $X$ to the point $||z||$. In summary, we have
$$||N(G_{H_x, K_y})||\simeq X\simeq||\mathcal{N}(H)||\vee||\mathcal{N}(K)||\vee\mathbb{S}^1$$
Here is an illustration of the procedure that we described above:
\begin{figure}[H]
    \centering
    \includegraphics[scale=0.36]{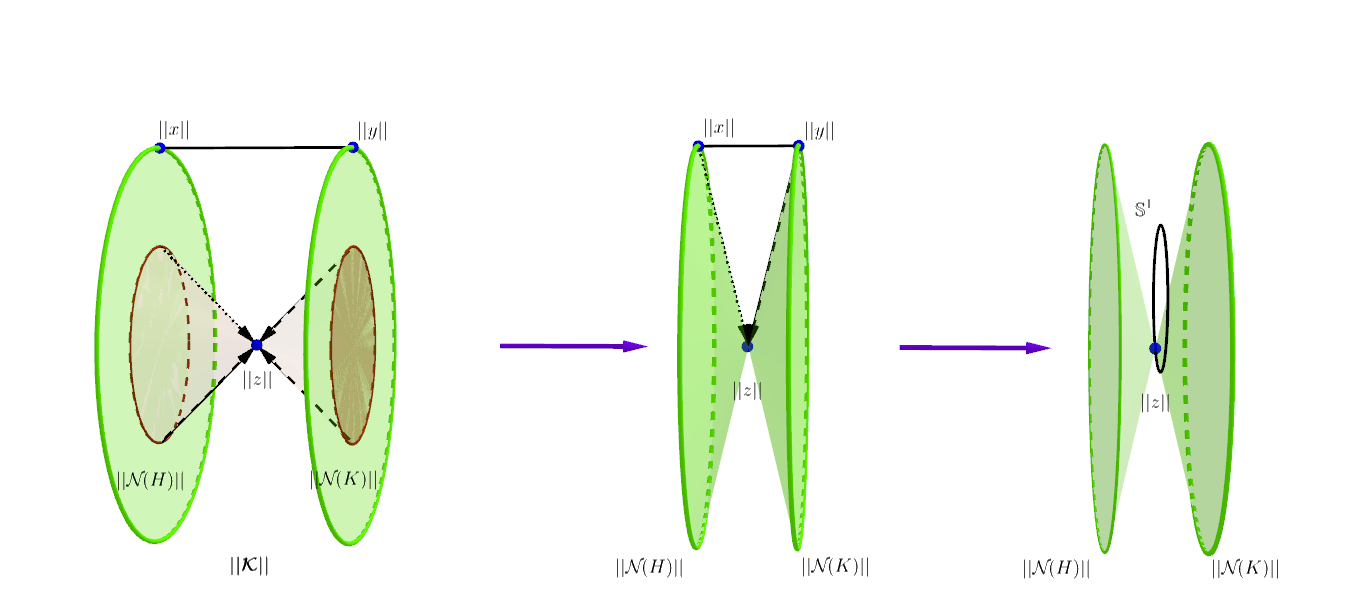}
\end{figure}
For the second part, we are going to use van Kampen's theorem~\cite{Hatcher}. Since any simplicial complex is locally contractible, by using van Kampen's theorem, we get
\begin{equation*} 
\begin{split}
\pi_1 (||\mathcal{N}(G_{H_x,K_y})||) & = \pi_1 (||\mathcal{N}(H)||\vee||\mathcal{N}(K)||\vee\mathbb{S}^1)\\
& =  \pi_1 (||\mathcal{N}(H)||)\ast\pi_1(||\mathcal{N}(K)||)\ast\pi_1(\mathbb{S}^1)\\
& =  \pi_1 (||\mathcal{N}(H)||)\ast\pi_1(||\mathcal{N}(K)||)\ast\mathbb{Z}\neq 0.
\end{split}
\end{equation*}
On the other hand $||\mathcal{N}(G_{H_x,K_y})||$ is pathconnected, i.e., $\pi_0 (||\mathcal{N}(G_{H_x,K_y})||)=0$. Thus, $conn (||\mathcal{N}(G_{H_x,K_y})||)=0.$
\end{proof}
As a corollary of above theorem, we give a new example of a connected graph containing an arbitrary large bipartite subgraph with additional properties that
gaps between its chromatic number, clique number, and Lov\'{a}sz bound are arbitrarily large. Consider positive integers $l, m$ and $2\leq p\leq q$. Let $T$ be a triangle-free graph with the chromatic number $q$; for construction such graphs see~\cite{Daneshpajouh, Erdos, Mycielski}. 
Let $\mathcal{K}_p$ and $\mathcal{K}_{l, m}$ be the complete graph and complete bipartite graph, respectively. Also, without loss of generality, suppose that $V(\mathcal{K}_p)\cap V(\mathcal{K}_{l,m})\cap V(T)=\emptyset$. Fix $a, b\in V(\mathcal{K}_m)$, $c\in V(\mathcal{K}_{l,m})$, and $d\in V(T)$. Now, let $H$ be obtained by taking $\mathcal{K}_{l,m}$, $\mathcal{K}_p$, and $T$ and connecting $a$ to $c$, and $b$ to $d$, by two different edges. In other words, $H$ is a graph with the following vertex set and edge set:
\begin{equation*} 
\begin{split}
& V(H) = V(\mathcal{K}_m)\cup V(\mathcal{K}_{l,m})\cup V(T),\\
& E(H) = E(\mathcal{K}_m)\cup E(\mathcal{K}_{l,m})\cup E(T)\cup\{a,c\}\cup\{b, d\}.
\end{split}
\end{equation*}
Now, fix an arbitrary vertex $s\in V(H)$. It is easy to check that the graph $G_{H_s,H_s}$ has the following properties, $\chi (G_{H_s, H_s}) = q$, $\omega (G_{H_s, H_s}) = p$, $\mathcal{K}_{l, m}\subseteq G_{H_s, H_s}$. But, by Theorem $2$, its Lov\'{a}sz bound is trivial, i.e., $3$.
In conclusion, we have the following corollary.
\begin{corollary}
For given positive integers $l, m$ and $2\leq p\leq q$, there is a graph which contains a copy of $\mathcal{K}_{l,m}$, and its chromatic number, clique number, and Lov\'{a}sz's bound are $q$, $p$, and $3$, respectively.
\end{corollary}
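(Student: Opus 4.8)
The plan is to produce the graph explicitly and then verify its four invariants, with Theorem~2 doing all the topological work. First I would fix a triangle-free graph $T$ with $\chi(T)=q$, whose existence is guaranteed by the cited constructions, and assemble the auxiliary graph $H$ out of $\mathcal{K}_p$, $\mathcal{K}_{l,m}$, and $T$ by adding the two bridge edges $\{a,c\}$ and $\{b,d\}$ exactly as described before the statement. The two structural facts I need about $H$ are that it is connected (each block is connected, and the two bridges tie the three blocks together) and that it is non-bipartite (it contains an odd cycle, namely a triangle inside $\mathcal{K}_p$ when $p\geq 3$, or an odd cycle of $T$ when $q\geq 3$). These are precisely the hypotheses required to feed $H$ into the gadget $G_{H_s,H_s}$ of Theorem~2.

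With $G=G_{H_s,H_s}$ for an arbitrary base vertex $s$, the Lov\'{a}sz bound follows at once: Theorem~2 gives $conn(||\mathcal{N}(G)||)=0$, and since the Lov\'{a}sz bound of a graph is by definition $conn(||\mathcal{N}(G)||)+3$ (Theorem~1), the bound for $G$ equals $0+3=3$. The containment $\mathcal{K}_{l,m}\subseteq H\subseteq G$ is immediate from the construction, so it only remains to compute $\chi(G)$ and $\omega(G)$.

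For the clique number, the only blocks carrying a clique larger than an edge are the copies of $\mathcal{K}_p$; since $T$ is triangle-free and $\mathcal{K}_{l,m}$ is bipartite, and since neither the bridges nor the length-two attaching path closes a new triangle, I expect $\omega(G)=\max(p,2)=p$. For the chromatic number, the inequality $\chi(G)\geq\chi(T)=q$ is free from $T\subseteq G$, and the matching upper bound is where the only real care is needed: I would invoke the standard fact that gluing two graphs of chromatic number at least two along a single cut-edge yields a graph whose chromatic number is the maximum of the two (a clash on the bridge is repaired by permuting colors within one block), apply it to the three blocks to get $\chi(H)=\max(q,p,2)=q$, and then apply the same principle to the length-two path joining the two copies of $H$ through $z$ to conclude $\chi(G)=q$.

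The step I expect to be the main obstacle is this exact chromatic-number count, specifically the upper bound: one must argue cleanly that no cut-edge or attaching path forces an extra color, using throughout that each glued block admits at least two colors (which holds since $p\geq 2$ and $q\geq 2$). Everything else---connectivity, non-bipartiteness, and the clique count---is routine, and the decisive ingredient, the connectivity being exactly zero so that the Lov\'{a}sz bound is simultaneously nontrivial yet pinned at $3$, is delivered wholesale by Theorem~2.
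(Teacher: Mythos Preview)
Your proposal is correct and follows exactly the paper's approach: the same auxiliary graph $H$ assembled from $\mathcal{K}_p$, $\mathcal{K}_{l,m}$, and a triangle-free $T$ with $\chi(T)=q$, fed into the gadget $G_{H_s,H_s}$, with Theorem~2 pinning the Lov\'{a}sz bound at $3$. You supply more detail on the $\chi$, $\omega$, connectedness, and non-bipartiteness verifications than the paper's terse ``it is easy to check,'' but the construction and the decisive use of Theorem~2 are identical.
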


\section*{Acknowledgements}
This is a part of the author's Ph.D. thesis, under the supervision of Professor Hossein Hajiabolhassan. I would like to express my sincere gratitude to him, for his constant support, guidance and motivation.

\end{document}